\title{Improvement of generalization of Larman-Rogers-Seidel's theorem}
\author{Cheng-Jui Yeh \\ email:
    \href{mailto:yeh.cheng.jui@gmail.com}{yeh.cheng.jui@gmail.com} 
    \and Wei-Hsuan Yu \\ email:
    \href{mailto:u690604@gmail.com}{u690604@gmail.com} }
\date{}
\theoremstyle{definition}
\newtheorem{thm}{Theorem}[section]
\newtheorem{lemma}{Lemma}[section]
\begin{document}
\maketitle

\begin{abstract}
    A finite set $X$ in the $d$-dimensional Euclidean space is called an $s$-distance set if the set of distances between any two distinct points of $X$ has size $s$. In 1977,
    Larman–Rogers–Seidel proved that if the cardinality of an two-distance set is large enough, then there exists an integer $k$ such that the two distances $\alpha$, $\beta$ $(\alpha<\beta)$ having the integer condition, namely,
    $\frac{\alpha^2}{\beta^2}=\frac{k-1}{k}$. In 2011, Nozaki generalized Larman–Rogers–Seidel's theorem to the case of $s$-distance sets, i.e. if the cardinality of an $s$-distance set $|X|\geqslant 2N$ with distances $\alpha_1,\alpha_2,\cdots,\alpha_s$, where $N=\binom{d+s-1}{s-1}+\binom{d+s-2}{s-2}$, then the numbers $k_i=\prod_{j=1,2,\cdots,s,\text{ }j\neq i}\frac{\alpha_{j}^{2}}{\alpha_{j}^{2}-\alpha_{i}^{2}}$ are integers. In this note, we reduce the lower bound of the requirement of integer condition of $s$-distance sets in $\mathbb{R}^d$.
    Furthermore, we can show that there are only finitely many $s$-distance sets $X$ in $\mathbb{R}^d$ with $|X|\geqslant 2\binom{d+s-1}{s-1}.$
\end{abstract}

\section{Introduction}

\hspace{0.6cm}Let $\mathbb{R}^d$ be the $d$-dimensional Euclidean space, and $|*|$ denote the cardinality. A finite set $X$ in $\mathbb{R}^d$ is said to be an $s$-distance set if there are exactly $s$ distinct distances between the points in $X$, i.e. $|A(X)|=s$, where $A(X)=\{\|x-y\|:x,y\in X\}$ be the set of the distances on $X$. If the size of an $s$-distance set is large enough, we are curious about whether there are any relations between the $s$ distinct distances. In 1977, Larman, Rogers, and Seidel (LRS) \cite{LRS} discovered the following theorem:
\begin{thm}\label{LRS_thm}\cite{LRS}
Let $X$ be a set in $\mathbb{R}^d$ that only realises the two distances $\alpha$ and $\beta$ with $\alpha<\beta$. If $|X|>2d+3$, then $\frac{\alpha^2}{\beta^2}=\frac{k-1}{k}$, with $k\geqslant 2$ a positive integer, and $k\leqslant \frac{1}{2}+\sqrt{\frac{1}{2}d}$.
\end{thm}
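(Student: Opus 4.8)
The plan is to encode the geometry of $X=\{x_1,\dots,x_n\}$ (write $n=|X|$) in the spectrum of the $\{0,1\}$ adjacency matrix $A$ of the graph whose edges are the point-pairs at the smaller distance $\alpha$, and then to read off the integrality of $k$ from the principle that an integer matrix cannot carry an irrational eigenvalue of very large multiplicity. First I would record the matrix identity linking $A$ to the squared-distance matrix $M=(\|x_i-x_j\|^2)_{i,j}$. Since each off-diagonal entry of $M$ equals $\alpha^2$ or $\beta^2$, we have $M=-\gamma A+\beta^2(J-I)$ with $\gamma:=\beta^2-\alpha^2>0$, where $J$ is all-ones and $I$ the identity. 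Setting $k:=\beta^2/\gamma$ and rearranging yields $A+kI=\gamma^{-1}(\beta^2J-M)$. Note that $\frac{\alpha^2}{\beta^2}=\frac{k-1}{k}$ is exactly equivalent to $k=\beta^2/\gamma$, so the whole theorem reduces to controlling this eigenvalue $-k$ of $A$.

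The crux is that $-k$ occurs with large multiplicity. Expanding $\|x_i-x_j\|^2=\|x_i\|^2+\|x_j\|^2-2\langle x_i,x_j\rangle$ gives $M=p\mathbf{1}^{T}+\mathbf{1}p^{T}-2G$, where $p=(\|x_i\|^2)_i$ and $G=(\langle x_i,x_j\rangle)_{i,j}$ is the Gram matrix with $\operatorname{rank}G\le d$. Hence every column of $\beta^2J-M$ lies in $\operatorname{span}(\mathbf{1},p)+\operatorname{col}(G)$, a space of dimension at most $d+2$, so $\operatorname{rank}(A+kI)=\operatorname{rank}(\beta^2J-M)\le d+2$. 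Consequently the multiplicity $m$ of the eigenvalue $-k$ satisfies $m=\dim\ker(A+kI)\ge n-d-2$.

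For the integer condition I would argue as follows. Because $A$ is a symmetric integer matrix, $-k$ is a real algebraic integer and $A$ is diagonalizable, so each Galois conjugate of $-k$ is again an eigenvalue of $A$ with the same multiplicity $m$. The hypothesis $n>2d+3$ means $n\ge 2d+4$, whence $2m\ge 2(n-d-2)\ge n$. If $-k$ were irrational, its minimal polynomial would have degree $e\ge 2$; its $e$ conjugates are then eigenvalues of $A$ each of multiplicity $m$, so $n\ge em\ge 2m\ge n$ and equality holds throughout, forcing $e=2$ and $2m=n$. Then $A$ has exactly two conjugate eigenvalues, which the vanishing trace forces to be $\pm\sqrt{c}$, so that $A^2=cI$. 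But $A^2=cI$ says the $\alpha$-graph is $c$-regular with no two vertices sharing a common neighbour, which is impossible unless $c\le 1$; hence $c\in\{0,1\}$ and $-k=\pm\sqrt{c}$ is rational, contradicting irrationality. Therefore $-k$ is a rational algebraic integer, i.e.\ $k\in\mathbb{Z}$, and $k=\beta^2/\gamma>1$ forces $k\ge 2$; equivalently $\frac{\alpha^2}{\beta^2}=\frac{k-1}{k}$.

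Finally, to bound $k$ I would use that $-k$ is in fact the least eigenvalue of $A$ (equivalently $A+kI\succeq 0$, which follows from the positive-semidefiniteness of the Gram matrix once $X$ is placed on a sphere) together with the trace identities $\sum_i\lambda_i=\operatorname{tr}A=0$ and $\sum_i\lambda_i^2=\operatorname{tr}A^2=2|E|$. Isolating the eigenvalue $-k$ of multiplicity $m$ leaves at most $d+2$ further eigenvalues whose sum is $km$; Cauchy–Schwarz applied to these, combined with $m\ge n-d-2$, produces a quadratic inequality in $k$ which rearranges to $k\le\frac12+\sqrt{d/2}$. I expect the genuine difficulty to lie not in the mechanism above but in extracting the sharp constants: the careless estimate $\operatorname{rank}(\beta^2J-M)\le\operatorname{rank}J+\operatorname{rank}M\le d+3$ only yields the weaker hypothesis $n>2d+6$, and reaching exactly $n>2d+3$ depends on absorbing the all-ones contribution of $\beta^2J$ into the rank-$2$ part of $M$ to obtain $\operatorname{rank}\le d+2$, together with the delicate boundary case $2m=n$ of the Galois argument; similarly the constant $\tfrac12$ in the bound on $k$ requires careful bookkeeping of the trace identities (most cleanly after the spherical reduction) rather than the crude $2|E|\le n(n-1)$.
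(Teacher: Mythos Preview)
The paper does not prove this theorem: Theorem~\ref{LRS_thm} is quoted from \cite{LRS} as historical background, and the paper's own contribution is the generalization Theorem~\ref{our_main_thm}, established via the polynomial method (the functions $F_y$ together with Nozaki's Lemma~\ref{main_lemma_Nozaki}). So there is no in-paper proof to compare against directly.

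Your spectral route is nonetheless a valid and genuinely different packaging from the polynomial one that the paper (and the original LRS argument) employs. The identity $A+kI=\gamma^{-1}(\beta^{2}J-M)$, the rank bound $\operatorname{rank}(A+kI)\le d+2$ via $\beta^{2}J-M=\beta^{2}\mathbf{1}\mathbf{1}^{T}-p\mathbf{1}^{T}-\mathbf{1}p^{T}+2G$, and the Galois/algebraic-integer argument for integrality (including the boundary case $2m=n\Rightarrow A^{2}=cI$ with $c\le 1$) are all correct. In fact the matrix $(F_{y}(x))_{x,y\in X}$ appearing in the polynomial method is exactly your $A+kI$, so both approaches are computing the same rank; the polynomial language is what scales to $s$ distances (and is what this paper exploits), whereas your matrix language makes the integrality step via conjugate eigenvalues more transparent.

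The one genuine gap is in the upper bound $k\le\tfrac12+\sqrt{d/2}$. You invoke ``$A+kI\succeq 0$ once $X$ is placed on a sphere'', but an arbitrary two-distance set need not be spherical (e.g.\ $\{0,1,2\}\subset\mathbb{R}^{1}$), and you supply no reduction to the spherical case; even on a sphere of radius $r$ one has $\beta^{2}J-M=(\beta^{2}-2r^{2})J+2G$, which is only obviously PSD when $\beta^{2}\ge 2r^{2}$. Without knowing that $-k$ is the least eigenvalue, the trace/Cauchy--Schwarz sketch does not visibly produce the stated constant. This step would need either a separate argument or the direct combinatorial count on the $0/1$-matrix $B=A+kI$ as in the original LRS paper and in Lemma~\ref{main_lemma_Nozaki}.
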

In 1981, Neumaier \cite{Neumaier} reduced the requirement of $|X|$ from $2d+3$ in Theorem \ref{LRS_thm} to $2d+1$ and found  counterexamples with $2d+1$ points in $\mathbb{R}^d$. The counterexamples are the spherical embedding of conference graphs. In 2011, Nozaki \cite{Nozaki} generalized the LRS Theorem to the case of $s$-distance sets, and found that the ratios of the $s$ distances also have integer conditions if the cardinality of $s$-distance sets is large enough:
\begin{thm}\label{Nozaki_thm}\cite{Nozaki}%(Nozaki, 2011)\\
\text{ }Let $X$ be an $s$-distance set in $\mathbb{R}^{d}$ with $s\geqslant 2$ and $A(X)=\{\alpha_{1},\cdots, \alpha_{s}\}$. Let $N=\binom{d+s-1}{s-1}+\binom{d+s-2}{s-2}$. If $|X|\geqslant 2N$, then \[\prod_{j=1,2,\cdots,s,\text{ }j\neq i}\frac{\alpha_{j}^{2}}{\alpha_{j}^{2}-\alpha_{i}^{2}}\] is an integer $k_{i}$ for each $i=1,2,\cdots, s$. Furthermore, $|k_{i}|\leqslant \lfloor\frac{1}{2}+\sqrt{\frac{N^{2}}{2N-2}+\frac{1}{4}}\rfloor$.
\end{thm}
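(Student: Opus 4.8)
\section*{Proof proposal}

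The plan is to reduce the integrality of each $k_i$ to a statement about the spectrum of an integer matrix attached to the $\alpha_i$-distance graph, and then to use the large cardinality of $X$ to force rationality. Write $X=\{x_1,\dots,x_n\}$ with $n=|X|\geqslant 2N$, and for each $i$ let $A_i$ be the $n\times n$ adjacency matrix of the graph on $X$ whose edges are the pairs at distance $\alpha_i$. Introduce the one-variable polynomial
\[ f_i(t)=\prod_{j=1,\dots,s,\ j\neq i}\frac{\alpha_j^2-t}{\alpha_j^2-\alpha_i^2}, \]
of degree $s-1$, which satisfies $f_i(\alpha_m^2)=\delta_{im}$ for all $m$ and $f_i(0)=\prod_{j\neq i}\frac{\alpha_j^2}{\alpha_j^2-\alpha_i^2}=k_i$. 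Forming the symmetric matrix $M_i$ with entries $(M_i)_{pq}=f_i(\|x_p-x_q\|^2)$, these evaluations give exactly $M_i=k_iI+A_i$, since the diagonal entries equal $f_i(0)=k_i$ and the off-diagonal entries equal $f_i(\alpha_m^2)\in\{0,1\}$.

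First I would bound $\operatorname{rank}(M_i)$. Expanding $\|x_p-x_q\|^2=\|x_p\|^2-2\langle x_p,x_q\rangle+\|x_q\|^2$ and then $f_i$, each entry $(M_i)_{pq}$ becomes a sum of products $g(x_p)h(x_q)$ in which the factors depending on $x_p$ all lie in the fixed space $W$ spanned by the functions $\|x\|^{2a}x^{\beta}$ with $a+|\beta|\leqslant s-1$; a direct count (equivalently, polynomials of degree $\leqslant s-1$ in $x_1,\dots,x_d,\|x\|^2$) gives $\dim W=\binom{d+s-1}{s-1}+\binom{d+s-2}{s-2}=N$. Choosing a basis $w_1,\dots,w_N$ of $W$ and writing $f_i(\|x-y\|^2)=\sum_{l=1}^{N} w_l(x)\phi_l(y)$ exhibits $M_i$ as a product of an $n\times N$ and an $N\times n$ matrix, so $\operatorname{rank}(M_i)\leqslant N$. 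Since $n\geqslant 2N>N$, $M_i$ is singular, and as $\ker M_i$ is precisely the eigenspace of $A_i$ for the eigenvalue $-k_i$, we conclude that $-k_i$ is an eigenvalue of $A_i$ of multiplicity $m\geqslant n-N$. Because $A_i$ is an integer matrix its characteristic polynomial is monic with integer coefficients, so $-k_i$ is an algebraic integer.

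The heart of the argument, and the step I expect to be the \emph{main obstacle}, is upgrading ``algebraic integer'' to ``integer''; this is exactly where the hypothesis $n\geqslant 2N$ is essential. Let $r$ be the degree of $-k_i$ over $\mathbb{Q}$. All $r$ Galois conjugates of $-k_i$ are roots of the rational characteristic polynomial of $A_i$, hence eigenvalues of $A_i$, and conjugate eigenvalues of a rational symmetric matrix occur with equal multiplicity $m$; thus $rm\leqslant n$. Since $n\geqslant 2N$ gives $m\geqslant n-N\geqslant n/2$, we get $r\leqslant 2$, and if $n>2N$ the same inequality already forces $r=1$. If $n=2N$ and $r=2$, then $m=N$ and the two conjugate (hence irrational) eigenvalues exhaust the spectrum, so $A_i$ has exactly two distinct eigenvalues; but a loopless graph whose adjacency matrix has exactly two distinct eigenvalues is a disjoint union of equal-sized complete graphs, whose eigenvalues $\kappa-1$ and $-1$ are integers --- contradicting irrationality. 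Hence $r=1$ in every case, so $-k_i$ is a rational algebraic integer, i.e.\ $k_i\in\mathbb{Z}$. (Note that the conference-graph counterexamples, which live at $n<2N$, are precisely what this threshold excludes.)

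Finally, for the bound on $|k_i|$ I would analyze the spectrum of $A_i$ quantitatively. The eigenvalue $-k_i$ has multiplicity $m\geqslant n-N$, so the remaining $n-m\leqslant N$ eigenvalues $\mu_1,\dots,\mu_{n-m}$ satisfy $\sum_j\mu_j=mk_i$ (from $\operatorname{tr}(A_i)=0$) and $mk_i^2+\sum_j\mu_j^2=\operatorname{tr}(A_i^2)$. Combining these moment identities with Cauchy--Schwarz applied to the $\mu_j$ and the multiplicity bound, then optimizing over the admissible range $n\geqslant 2N$ (the extremal case occurring at $n=2N$), should yield $|k_i|(|k_i|-1)\leqslant \frac{N^2}{2N-2}$, which is equivalent to the stated estimate $|k_i|\leqslant\lfloor\frac12+\sqrt{\frac{N^2}{2N-2}+\frac14}\rfloor$; for $s=2$, where $N=d+2$, this recovers the Larman--Rogers--Seidel bound $k\leqslant\frac12+\sqrt{d/2}$ of Theorem \ref{LRS_thm}. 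I regard the rank computation and this final optimization as routine (the exact constant $\frac{N^2}{2N-2}$ being the only delicate point), while the rationality step is the genuine difficulty.
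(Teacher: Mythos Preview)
Your approach coincides with the one the paper attributes to Nozaki. The paper does not prove Theorem~\ref{Nozaki_thm} itself; it cites it and explains (Section~2) that it follows from Lemma~\ref{main_lemma_Nozaki} applied to the functions $F_y(x)=\prod_{j\neq i}\frac{\alpha_j^2-\|x-y\|^2}{\alpha_j^2-\alpha_i^2}\in W_{s-1}(\mathbb{R}^d)$, whose ambient dimension is at most $N$. Your matrix $M_i$ is exactly the matrix $(F_{x_p}(x_q))_{p,q}$, your rank bound is the statement $\dim W_{s-1}\leqslant N$, and your Galois/two-eigenvalue argument is the content of Lemma~\ref{main_lemma_Nozaki}. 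So at the level of strategy there is nothing new to compare; you have simply unpacked the cited lemma.

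The integrality portion is correct and complete. The genuine gap is in the quantitative bound. The moment identities you write down, together with Cauchy--Schwarz and the crude estimate $\operatorname{tr}(A_i^2)=2e\leqslant n(n-1)$, give only
\[
(n-N)\,k_i^{2}\ \leqslant\ N(n-1),
\]
hence at the extremal value $n=2N$ merely $|k_i|\leqslant\sqrt{2N-1}$, which is weaker (by about a factor $2$) than the claimed $|k_i|(|k_i|-1)\leqslant\frac{N^{2}}{2N-2}$. Reaching Nozaki's constant requires an extra ingredient beyond the bare trace/Cauchy--Schwarz outline --- for instance, pairing $M_i$ with the complementary low-rank matrix $J-M_i$ so that $k_i$ and $k_i-1$ are controlled simultaneously --- and your sketch does not supply it. (Relatedly, your remark that for $s=2$, $N=d+2$, one recovers the LRS bound $k\leqslant\frac12+\sqrt{d/2}$ is not accurate: $\frac{(d+2)^2}{2d+2}$ equals $d/2$ only asymptotically, not exactly.)
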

However, in the case of $s=2$, the condition in Theorem \ref{Nozaki_thm} gives $|X|>2d+3$, which is larger than the condition $|X|>2d+1$ improved by Neumaier. In this note, we prove that the latter term of the bound $|X|\geqslant 2\binom{d+s-1}{s-1}+2\binom{d+s-2}{s-2}$ can be removed, and hence the theorem becomes as follows:
\begin{thm}\label{our_main_thm}
Let $X$ be an s-distance set in $\mathbb{R}^{d}$ with $s\geqslant 2$ and $A(X)=\{\alpha_{1},\cdots, \alpha_{s}\}$. Let $N=\binom{d+s-1}{s-1}$. If $|X|\geqslant 2N$, then \[\prod_{j=1,2,\cdots,s,\text{ }j\neq i}\frac{\alpha_{j}^{2}}{\alpha_{j}^{2}-\alpha_{i}^{2}}\] is an integer $k_{i}$ for each $i=1,2,\cdots, s$. Furthermore, $|k_{i}|\leqslant \lfloor\frac{1}{2}+\sqrt{\frac{N^{2}}{2N-2}+\frac{1}{4}}\rfloor$.
\end{thm}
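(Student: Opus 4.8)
The plan is to attach to each distance $\alpha_i$ a symmetric matrix whose off-diagonal part is a $0/1$ adjacency matrix and whose diagonal is exactly $k_i$, and then to force $-k_i$ to be an eigenvalue of large multiplicity of an integer matrix. Writing $X=\{x_1,\dots,x_n\}$ with $n=|X|$, I would define, for each $i$, the $n\times n$ matrix
\[
E_i=\left(\prod_{j\neq i}\frac{\|x_p-x_q\|^2-\alpha_j^2}{\alpha_i^2-\alpha_j^2}\right)_{p,q}.
\]
For $p\neq q$ the entry equals $1$ if $\|x_p-x_q\|=\alpha_i$ and $0$ otherwise, so the off-diagonal part of $E_i$ is the adjacency matrix $A_i$ of the graph on $X$ joining points at distance $\alpha_i$; the diagonal entries are $\prod_{j\neq i}\frac{-\alpha_j^2}{\alpha_i^2-\alpha_j^2}=k_i$. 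Thus $E_i=A_i+k_iI$, where $A_i$ is a symmetric $0/1$ matrix. Everything now reduces to controlling $\operatorname{rank}E_i$: if $\operatorname{rank}E_i\le N$, then $-k_i$ is an eigenvalue of the integer matrix $A_i$ with multiplicity at least $n-N$.

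The heart of the improvement is the sharpened rank bound $\operatorname{rank}E_i\le N=\binom{d+s-1}{s-1}$. Each entry of $E_i$ is the value at $\|x_p-x_q\|^2$ of a fixed one-variable polynomial $F$ of degree $s-1$, and expanding $\|x-y\|^2=\|x\|^2+\|y\|^2-2\langle x,y\rangle$ exhibits $E_i$ as a Gram-type matrix $E_i(x,y)=\sum_k \phi_k(x)\phi_k(y)$ whose building blocks $\phi_k$ are the functions $\|x\|^{2a}x^{\gamma}$ with $a+|\gamma|\le s-1$. Counting these naively gives Nozaki's bound $\binom{d+s-1}{s-1}+\binom{d+s-2}{s-2}=\dim\big(\sum_{b}\|x\|^{2b}\operatorname{Pol}_{s-1-b}\big)$, and the extra term is exactly the contribution of the radial factors $\|x\|^{2a}$ with $a\ge 1$. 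The point I would push is that this radial excess can be removed: on a sphere $\|x\|^2$ is constant, the surviving functions span only $\operatorname{Pol}_{s-1}(\mathbb{R}^d)$, and $\operatorname{rank}E_i\le\dim\operatorname{Pol}_{s-1}=\binom{d+s-1}{s-1}=N$. I would therefore first dispose of the non-spherical case---either by the fact that an $s$-distance set with $|X|\ge 2\binom{d+s-1}{s-1}$ must lie on a sphere, or by a direct congruence argument eliminating the radial directions---and then run the spherical computation. I expect this reduction to the spherical situation, i.e. showing that the radial term genuinely costs nothing once $|X|$ is this large, to be the main obstacle, since for small non-spherical configurations $\operatorname{rank}E_i$ really does exceed $N$.

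With the rank bound in hand, $-k_i$ is an eigenvalue of the symmetric integer matrix $A_i$ of multiplicity $m\ge n-N\ge n/2$, using $n\ge 2N$ (so $N\le n/2$). Hence $-k_i$ is an algebraic integer, and if it were irrational each of its Galois conjugates would be an eigenvalue of $A_i$ of the same multiplicity $m$, forcing total multiplicity $\ge 2m>n$ unless $m=n/2$. This already gives $k_i\in\mathbb{Z}$ when $n>2N$; at the boundary $n=2N$ (so $m=n/2$ exactly) I would treat the degenerate case separately, where $A_i$ has only two distinct eigenvalues and is consequently the adjacency matrix of a strongly regular or complete-multipartite graph, and verify directly that $k_i$ is then an integer.

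Finally, for the bound on $|k_i|$ I would argue as in Theorem \ref{Nozaki_thm}, now with the smaller $N$: from $\operatorname{rank}E_i\le N$ together with $\operatorname{tr}E_i=nk_i$ and the identity $\operatorname{tr}E_i^2=nk_i^2+\operatorname{tr}A_i^2$, the Cauchy--Schwarz inequality $(\operatorname{tr}E_i)^2\le N\cdot\operatorname{tr}E_i^2$ valid for a symmetric matrix of rank at most $N$ yields a quadratic inequality in $k_i$ whose largest admissible integer solution is $\big\lfloor\frac12+\sqrt{\frac{N^2}{2N-2}+\frac14}\big\rfloor$, after optimizing over the valency of $A_i$.
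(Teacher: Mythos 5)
Your outer skeleton --- the matrix $E_i=A_i+k_iI$ with $0/1$ off-diagonal part, the rank bound forcing $-k_i$ to be an eigenvalue of the integer matrix $A_i$ with multiplicity $\geqslant |X|-N$, the Galois-conjugate argument for integrality, and the trace/Cauchy--Schwarz estimate for $|k_i|$ --- is precisely the content of Nozaki's Lemma~\ref{main_lemma_Nozaki}, which the paper simply invokes as a black box (including the boundary case $|X|=2N$ that you propose to handle separately). That part is sound. The entire new content of the theorem lives in the single inequality $\operatorname{rank}E_i\leqslant\binom{d+s-1}{s-1}$, and this is exactly where your proposal has a genuine gap.

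Your route to that inequality is a reduction to the spherical case: if $X$ lies on a sphere then $\|x\|^2$ is constant, the radial generators $\|x\|^{2a}x^{\gamma}$ with $a\geqslant 1$ collapse into $\operatorname{Pol}_{s-1}(\mathbb{R}^d)$, and the rank drops to $N$. But the premise is unavailable: there is no theorem that an $s$-distance set with $|X|\geqslant 2\binom{d+s-1}{s-1}$ must lie on a sphere, and the alternative ``direct congruence argument eliminating the radial directions'' is not specified. You correctly identify this reduction as the main obstacle, but identifying an obstacle is not the same as overcoming it, and without it your rank bound reverts to Nozaki's $\binom{d+s-1}{s-1}+\binom{d+s-2}{s-2}$. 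The paper circumvents sphericity entirely by a Bannai--Bannai--Stanton-style augmentation: it proves that the functions $\{F_y:y\in X\}$ together with the $\binom{d+s-2}{s-2}$ monomials $x^{\lambda}$ of degree $\leqslant s-2$ are linearly independent inside $W_{s-1}(\mathbb{R}^d)$. The independence is established by induction on the degree of $\lambda$, using the fact (Lemma~\ref{change_basis}) that monomials of degree $\ell-2$ are spanned by derivatives $\partial_1^{b_1}\cdots\partial_d^{b_d}\|x\|^{2(s-1)}$ with $b_1+\cdots+b_d=2s-\ell$, so that a relation $\sum_y C_yF_y+\sum_{\lambda}C_{\lambda}x^{\lambda}=0$ forces $\sum_yC_y\,y^{\lambda}=0$ degree by degree and ultimately $\sum_yC_y^2=0$. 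This yields $\dim\operatorname{span}\{F_y\}\leqslant\dim W_{s-1}-\binom{d+s-2}{s-2}\leqslant\binom{d+s-1}{s-1}$ for an arbitrary, not necessarily spherical, configuration. If you want to complete your argument you should replace the sphericity reduction by this linear-independence step (or an equivalent one); as written, the central claim is unproved.
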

In particular, if $s=2$, the condition gives $|X|>2d+1$, which is same as the best situation in the case of two-distance sets.

In section 2, we will recall some lemmas that will be used later. In section 3, we will prove our main result Theorem \ref{our_main_thm}. In section 4, we will reduce the requirement of the size of $s$-distance sets to guarantee that there are only finitely many $s$-distance sets $X$ in $\mathbb{R}^d$ with $|X|\geqslant 2\binom{d+s-1}{s-1}$, due to our improvement of Theorem \ref{Nozaki_thm}.

%%%%%%%%%%%%%%%%%%%%%%%%%%%%%%%%%%%%%%%%%%%%%%%%%%%%%%%%

\section{Preliminaries}

\hspace{0.6cm}Here, we denote $P_{\ell}(\mathbb{R}^{d})$ for the space of all the polynomials with $d$ variables of degree $\leq\ell$. Let $x_{1},\cdots, x_{d}$ be independent variables, and set $x_{0}=x_{1}^{2}+\cdots +x_{d}^{2}$. We denote $W_{\ell}(\mathbb{R}^{d})$ for the linear space spanned by monomials $x_{0}^{\lambda_{0}}x_{1}^{\lambda_{1}}\cdots x_{d}^{\lambda_{d}}$ with $\lambda_{0}+\lambda_{1}+\cdots \lambda_{d}\leqslant \ell$ and $\lambda_{i}\geqslant 0$. 
First, we recall that Theorem \ref{Nozaki_thm} in \cite{Nozaki}  is proved by using the following lemma:
\begin{lemma}\label{main_lemma_Nozaki}\cite{Nozaki}
Let $X$ be a finite subset of $\Omega\subset\mathbb{R}^{d}$, and $\mathcal{P}(\mathbb{R}^{d})$ a linear subspace of $P_{\ell}(\mathbb{R}^{d})$.
Let $N=dim(\mathcal{P}(\Omega))$. Suppose that there exists $F_{x}\in \mathcal{P}(\Omega)$ for each $x\in X$ such that $F_{x}(x)=k$ where k is a constant, $F_{x}(y)$ are 0 or 1 for all $y\in X$ with $y\neq x$, and $F_{x}(y)=F_{y}(x)$ for all $x,y\in X$. If $|X|\geq 2N$, then $k$ is an integer and $|k|\leqslant\lfloor\frac{1}{2}+\sqrt{\frac{N^{2}}{2N-2}+\frac{1}{4}}\rfloor$.
\end{lemma}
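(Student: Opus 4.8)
The plan is to encode all the hypotheses into a single symmetric matrix and to read the conclusion off its spectrum. Write $X=\{x_1,\dots,x_n\}$ with $n=|X|\ge 2N$ and form the $n\times n$ matrix $M=(F_{x_i}(x_j))_{1\le i,j\le n}$. The symmetry hypothesis $F_x(y)=F_y(x)$ makes $M$ symmetric; its diagonal entries all equal $k$ and its off-diagonal entries lie in $\{0,1\}$, so $M=kI+A$ where $A$ is the adjacency matrix of a graph $G$ on $X$. The first step is the rank bound: fixing a basis $p_1,\dots,p_N$ of $\mathcal{P}(\Omega)$ and writing each $F_{x_i}\vert_\Omega=\sum_\ell c_{i\ell}p_\ell$, we obtain $M=CP$ with $C=(c_{i\ell})$ of size $n\times N$ and $P=(p_\ell(x_j))$ of size $N\times n$, whence $\operatorname{rank}(M)\le N$. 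Equivalently, $0$ is an eigenvalue of $M$ of multiplicity at least $n-N$, so $-k$ is an eigenvalue of the integer matrix $A$ of multiplicity $m\ge n-N\ge N$.

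Integrality of $k$ will follow from an algebraic-integer argument. Since $A$ is a symmetric integer matrix, its characteristic polynomial $\chi_A$ is monic over $\mathbb{Z}$, so every eigenvalue of $A$, in particular $-k$, is a real algebraic integer, and Galois-conjugate eigenvalues occur in $\chi_A$ with equal multiplicities. If $-k$ were irrational it would have a conjugate $-k'\ne -k$, also an eigenvalue of multiplicity $m$, giving $2m\ge 2(n-N)\ge n=\deg\chi_A$. For $n>2N$ this is already a contradiction. The delicate case is the threshold $n=2N$, where the inequalities are forced to be equalities: the spectrum of $A$ is then exactly $\{-k,-k'\}$, each with multiplicity $N$, and $\operatorname{tr}(A)=0$ forces $k'=-k$, whence $A^2=k^2I$. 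Reading this identity back through $G$, it says that $G$ is $k^2$-regular and that no two distinct vertices have a common neighbour; this forces $k^2\le 1$ and hence $k\in\{0,1\}$, contradicting irrationality. Therefore $-k$ is rational, and being an algebraic integer it is an integer.

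For the bound on $|k|$ the plan is to compare the first two spectral moments of $M$ with those of its complementary matrix $J-M$ (with $J$ all-ones). We have $\operatorname{tr}(M)=nk$ and $\operatorname{tr}(M^2)=nk^2+2e$ with $e=|E(G)|$, and since $M$ has at most $N$ nonzero eigenvalues, Cauchy--Schwarz gives $(\operatorname{tr}M)^2\le N\operatorname{tr}(M^2)$. The $\{0,1\}$ structure means $J-M=(1-k)I+\bar{A}$ with $\bar{A}$ the adjacency matrix of the complement graph, and on the common null space of $M$ and $J$ the matrix $A$ acts as $-k$ while $\bar{A}$ acts as $k-1$; this is exactly what makes the product $k(k-1)$ surface. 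Running the moment estimates for $A$ and $\bar{A}$ jointly on the orthogonal complement of this eigenspace and using $2e+2\bar{e}=n(n-1)$ together with $n\ge 2N$ yields a quadratic inequality equivalent to $2(N-1)k(k-1)\le N^2$. Solving this quadratic and using that $k$ is an integer gives exactly $|k|\le\lfloor\tfrac12+\sqrt{\tfrac{N^2}{2N-2}+\tfrac14}\rfloor$.

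I expect the main obstacle to be the threshold case $n=2N$ in the integrality step. For $n>2N$ the conjugate-counting is a soft pigeonhole argument, but at $n=2N$ the pigeonhole is tight and the conclusion genuinely needs the extra input that $A$ is an honest adjacency matrix, so that $A^2=k^2I$ can be refuted combinatorially. This is precisely the phenomenon that makes the hypothesis $|X|\ge 2N$ sharp and that accounts for the irrational conference-graph examples appearing one step below the threshold. A secondary technical point is arranging the complementary-matrix moment computation tightly enough to reach the stated constant, since a naive single application of Cauchy--Schwarz loses the factor separating $\tfrac{N^2}{2N-2}$ from the cruder estimate $2N-1$.
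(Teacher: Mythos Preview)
The paper does not prove this lemma; it is quoted from Nozaki \cite{Nozaki} with a citation and used as a black box, so there is no proof here to compare your proposal against.

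For what it is worth, your plan is essentially Nozaki's own argument (which in turn follows Larman--Rogers--Seidel): form the symmetric matrix $M=kI+A$, bound its rank by $N$ via the factorisation through a basis of $\mathcal P(\Omega)$, deduce that $-k$ is an eigenvalue of the integer matrix $A$ of multiplicity at least $n-N\ge N$, and conclude integrality from the fact that irrational algebraic conjugates occur with equal multiplicity in $\chi_A$. Your treatment of the threshold $n=2N$ via $A^2=k^2I$ is correct; the combinatorial refutation (a $k^2$-regular graph with no two vertices sharing a neighbour forces $k^2\le 1$) is exactly the right extra input, though you should allow $k=-1$ as well as $k\in\{0,1\}$---this still contradicts irrationality. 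Your outline for the $|k|$ bound is also the standard one, and the quadratic $2(N-1)k(k-1)\le N^2$ does rearrange to the stated floor; the joint Cauchy--Schwarz estimate on the non-$(-k)$ eigenvalues of $A$ and the non-$(k-1)$ eigenvalues of $\bar A$, combined with $2e+2\bar e=n(n-1)$ and $n\ge 2N$, delivers it with no further ideas needed.
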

From Lemma \ref{main_lemma_Nozaki}, we can see that the  lower bound \[|X|\geqslant 2\binom{d+s-1}{s-1}+2\binom{d+s-2}{s-2}\]
in Theorem \ref{Nozaki_thm} is coming from the dimension of the function space where the set of functions
\begin{equation}\label{set_of_funcs_Nozaki}
    \{\text{ }F_{y}(x)=\prod_{j=1,2,\cdots,s,\text{ }j\neq i}\frac{\alpha_{j}^{2}-\|y-x\|^{2}}{\alpha_{j}^{2}-\alpha_{i}^{2}}\text{ }:\text{ }y\in X\text{ }\}\text{ }\subset W_{s-1}(\mathbb{R}^d)
\end{equation}
lives in. In fact, a similar set of functions 
\begin{equation}\label{set_of_funcs}
    \{\text{ }G_{y}(x)=\prod_{j=1,2,\cdots,s}\frac{\alpha_{j}^{2}-\|y-x\|^{2}}{\alpha_{j}^{2}}\text{ }:\text{ }y\in X\text{ }\}\text{ }\subset W_{s}(\mathbb{R}^d)
\end{equation}
had been considered in \cite{Bannai_2} by Bannai, Bannai, and Stanton in 1983 for getting new upper bounds of the cardinality of $s$-distance sets in $\mathbb{R}^d$. They found that $\{G_{y}(x):y\in X\}$ and $\{x_{1}^{\lambda_{1}}x_{2}^{\lambda_{2}}\cdots x_{d}^{\lambda_{d}}:0\leqslant\lambda_{1}+\lambda_{2}+\cdots+\lambda_{d}\leqslant s-1\}$ are linearly independent sets in $W_{s}(\mathbb{R}^d)$, and therefore improved the upper bounds of $s$-distance sets to \[|X|\leqslant\binom{d+s}{s}.\] 
In section 3, we show that the ideal in \cite{Bannai_2} can be used to prove that $\{F_{y}(x):y\in X\}$ and $\{x_{1}^{\lambda_{1}}x_{2}^{\lambda_{2}}\cdots x_{d}^{\lambda_{d}}:0\leqslant\lambda_{1}+\lambda_{2}+\cdots+\lambda_{d}\leqslant s-2\}$ are linearly independent sets in $W_{s-1}(\mathbb{R}^d)$, and thus the lower bound in Theorem \ref{Nozaki_thm} can be improved.

Before going into the proof of Theorem \ref{our_main_thm}, we recall some Lemmas in \cite{Bannai_1}, \cite{Bannai_2} which will be used later:

\begin{lemma}\label{dim_W}\cite{Bannai_1}
    \[\text{dim}(W_{\ell}(\mathbb{R}^{d}))\leqslant \binom{d+\ell}{\ell}+\binom{d+\ell-1}{\ell-1}\]
\end{lemma}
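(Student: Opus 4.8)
The plan is to realise $W_{\ell}(\mathbb{R}^d)$ as a subspace of the classical harmonic decomposition of the polynomial ring and read off its dimension from there. Write $\Delta=\partial_1^2+\cdots+\partial_d^2$ for the Laplacian and let $\mathcal{H}_k\subset\mathbb{R}[x_1,\dots,x_d]$ be the space of harmonic homogeneous polynomials of degree $k$, i.e. those $p\in H_k$ with $\Delta p=0$, where $H_k$ denotes all homogeneous polynomials of degree $k$ in $x_1,\dots,x_d$. The facts I would take as input are the standard decomposition $H_k=\bigoplus_{i\geq 0}x_0^{\,i}\mathcal{H}_{k-2i}$ (recall $x_0=x_1^2+\cdots+x_d^2$) together with $\dim\mathcal{H}_k=\binom{d+k-1}{k}-\binom{d+k-3}{k-2}$; equivalently $\dim\mathcal{H}_k=\dim H_k-\dim H_{k-2}$, with $\dim H_j=\binom{d+j-1}{j}$ and the convention that binomial coefficients with negative lower entry vanish.

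The key step is the single inclusion
\[W_{\ell}(\mathbb{R}^d)\ \subseteq\ \bigoplus_{\substack{b,k\geq 0\\ b+k\leq \ell}} x_0^{\,b}\,\mathcal{H}_k .\]
To prove it, I would take a generating monomial $x_0^{a}m$ with $m$ a monomial in $x_1,\dots,x_d$ of degree $j$ and $a+j\leq\ell$, expand $m\in H_j=\bigoplus_i x_0^{\,i}\mathcal{H}_{j-2i}$, and observe that each resulting piece lies in $x_0^{\,a+i}\mathcal{H}_{j-2i}$ with $(a+i)+(j-2i)=a+j-i\leq a+j\leq\ell$. Thus every generator of $W_{\ell}(\mathbb{R}^d)$ splits into harmonic pieces $x_0^{\,b}\mathcal{H}_k$ obeying $b+k\leq\ell$, which is exactly the claimed containment; the directness of the sum on the right is inherited from the harmonic decomposition of the whole polynomial ring.

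It then remains to count. Since the sum is direct,
\[\dim\Big(\bigoplus_{b+k\leq\ell}x_0^{\,b}\mathcal{H}_k\Big)=\sum_{k=0}^{\ell}(\ell-k+1)\dim\mathcal{H}_k,\]
and substituting $\dim\mathcal{H}_k=\dim H_k-\dim H_{k-2}$ and reindexing collapses the coefficients to $2\sum_{k=0}^{\ell-1}\binom{d+k-1}{k}+\binom{d+\ell-1}{\ell}$. A hockey-stick summation turns the first term into $2\binom{d+\ell-1}{\ell-1}$, and one application of Pascal's rule rewrites the total as $\binom{d+\ell}{\ell}+\binom{d+\ell-1}{\ell-1}$. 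Hence $\dim W_{\ell}(\mathbb{R}^d)\leq\binom{d+\ell}{\ell}+\binom{d+\ell-1}{\ell-1}$; in fact the reverse inclusion holds just as easily (each $x_0^{\,b}\mathcal{H}_k$ with $b+k\leq\ell$ sits inside $x_0^{\,b}H_k\subseteq W_{\ell}$), so the bound is actually an equality. I expect the only genuine bookkeeping hazard to be the inclusion step, namely checking that the index shift $b=a+i,\ k=j-2i$ never pushes $b+k$ above $\ell$; the dimension computation is then a routine telescoping once the harmonic facts are in hand.
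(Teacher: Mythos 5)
The paper does not prove Lemma \ref{dim_W} at all: it quotes the bound from Bannai--Bannai \cite{Bannai_1}, so there is no in-text proof to compare against, and your job was effectively to reconstruct the argument of the cited reference. Your reconstruction is correct and complete. The inclusion $W_{\ell}(\mathbb{R}^d)\subseteq\bigoplus_{b+k\leq\ell}x_0^{\,b}\mathcal{H}_k$ goes through exactly as you describe: the index shift $b=a+i$, $k=j-2i$ gives $b+k=a+j-i\leq a+j\leq\ell$, so no piece escapes the range; the directness of the sum is indeed inherited from the global decomposition $\mathbb{R}[x_1,\dots,x_d]=\bigoplus_{b,k\geq0}x_0^{\,b}\mathcal{H}_k$, since distinct pairs $(b,k)$ label distinct summands (within each homogeneous degree $n=2b+k$ the index $b$ determines the summand, and different degrees are independent). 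The count also checks out: with $h_k=\binom{d+k-1}{k}$ and $\dim\mathcal{H}_k=h_k-h_{k-2}$, the telescoping
\[\sum_{k=0}^{\ell}(\ell-k+1)(h_k-h_{k-2})=2\sum_{k=0}^{\ell-1}h_k+h_{\ell}=2\binom{d+\ell-1}{\ell-1}+\binom{d+\ell-1}{\ell}=\binom{d+\ell}{\ell}+\binom{d+\ell-1}{\ell-1}\]
is exactly as you state, using the hockey-stick identity and one application of Pascal's rule. This harmonic-decomposition route is essentially the classical argument underlying \cite{Bannai_1}, so you are in step with the source rather than diverging from it. Your additional observation that the reverse inclusion $x_0^{\,b}\mathcal{H}_k\subseteq x_0^{\,b}H_k\subseteq W_{\ell}(\mathbb{R}^d)$ for $b+k\leq\ell$ upgrades the stated inequality to the equality $\dim W_{\ell}(\mathbb{R}^d)=\binom{d+\ell}{\ell}+\binom{d+\ell-1}{\ell-1}$ is also correct, though the paper only needs the upper bound (it is what caps $\dim(\mathrm{span}\{F_y:y\in X\})$ in the proof of Theorem \ref{our_main_thm}).
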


\begin{lemma}\label{change_basis}\cite{Bannai_2}
    Fix $s'\in\mathbb{N}$. Let $x_1,\cdots,x_d$ be independent variables, and denote $\partial_i=\frac{\partial}{\partial x_i}$. Let $2\leqslant\ell\leqslant s'+2$, then we have
    \begin{align*}
        &\text{the space spanned by } \{\partial_1^{b_1}\cdots\partial_d^{b_d}(x_1^2+\cdots+x_d^2)^{s'}\text{ : } b_1+\cdots+b_d=2s'-\ell+2\}\\
        =\text{ }&\text{the space spanned by } \{x_1^{\alpha_1}x_2^{\alpha_2}\cdots x_d^{\alpha_d}\text{ : }\alpha_1+\cdots+\alpha_d=\ell-2\}
    \end{align*}
\end{lemma}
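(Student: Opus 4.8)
The plan is to reinterpret both sides as subspaces of the space $\mathrm{Hom}_{\ell-2}(\mathbb{R}^d)$ of homogeneous polynomials of degree $\ell-2$, and to reduce the asserted equality to an injectivity statement via the apolar pairing. Write $r^2 = x_1^2+\cdots+x_d^2$. Since $r^{2s'}$ is homogeneous of degree $2s'$ and each operator $\partial_1^{b_1}\cdots\partial_d^{b_d}$ with $b_1+\cdots+b_d = 2s'-\ell+2$ lowers degree by exactly $2s'-\ell+2$, every generator $\partial^b r^{2s'}$ on the left lies in $\mathrm{Hom}_{\ell-2}(\mathbb{R}^d)$; thus the left-hand span is automatically contained in the right-hand space, and the content of the lemma is the reverse inclusion, i.e.\ that these generators span all of $\mathrm{Hom}_{\ell-2}(\mathbb{R}^d)$.

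To handle the reverse inclusion I would use the apolar (Fischer) pairing on homogeneous polynomials of a fixed degree $k$, defined by $\langle P, Q\rangle = P(\partial)Q$ (a scalar), where $P(\partial)$ is the constant-coefficient operator obtained by substituting $\partial_i$ for $x_i$. On monomials $\langle x^\alpha, x^\beta\rangle = \alpha!\,\delta_{\alpha\beta}$, so this pairing is symmetric, nondegenerate, and positive definite over $\mathbb{R}$. Let $V\subseteq\mathrm{Hom}_{\ell-2}(\mathbb{R}^d)$ be the left-hand span. By nondegeneracy, $V = \mathrm{Hom}_{\ell-2}(\mathbb{R}^d)$ iff its orthogonal complement is trivial. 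For $H\in\mathrm{Hom}_{\ell-2}(\mathbb{R}^d)$ one computes $\langle H, \partial^b r^{2s'}\rangle = \partial^b\big(H(\partial)r^{2s'}\big)$, and since $H(\partial)r^{2s'}\in\mathrm{Hom}_{2s'-\ell+2}(\mathbb{R}^d)$ while $|b| = 2s'-\ell+2$, these scalars (over all such $b$) are precisely the coefficients of $H(\partial)r^{2s'}$ up to the nonzero factors $b!$. Hence $H\perp V$ iff $H(\partial)r^{2s'}=0$, and the lemma reduces to the injectivity of the map $\Phi\colon\mathrm{Hom}_{\ell-2}(\mathbb{R}^d)\to\mathrm{Hom}_{2s'-\ell+2}(\mathbb{R}^d)$, $H\mapsto H(\partial)r^{2s'}$.

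Set $m=\ell-2$, so $0\le m\le s'$ by the hypothesis $\ell\le s'+2$. Suppose $H\in\mathrm{Hom}_m(\mathbb{R}^d)$ satisfies $G := H(\partial)r^{2s'} = 0$; I want $H=0$. The key algebraic step is the adjunction: under the apolar pairing, multiplication by $H$ is adjoint to $H(\partial)$, i.e.\ $\langle r^{2s'}, H\cdot R\rangle = \langle H(\partial)r^{2s'}, R\rangle$ for $R\in\mathrm{Hom}_{2s'-m}(\mathbb{R}^d)$ (a one-line check on monomials). Taking $R = H\,r^{2(s'-m)}$, which lies in $\mathrm{Hom}_{2s'-m}(\mathbb{R}^d)$ because $m\le s'$, gives
\[
\langle r^{2s'},\, H^2 r^{2(s'-m)}\rangle \;=\; \langle G,\, H\,r^{2(s'-m)}\rangle \;=\; 0 .
\]
On the other hand $\langle r^{2s'}, P\rangle = (r^{2s'})(\partial)P = \Delta^{s'}P$ for $P\in\mathrm{Hom}_{2s'}(\mathbb{R}^d)$, where $\Delta=\partial_1^2+\cdots+\partial_d^2$. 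The crux of the argument — and the step I expect to be the main obstacle — is the positivity statement that $\Delta^{s'}P$ is a strictly positive multiple of $\int_{S^{d-1}}P\,d\sigma$ for every $P\in\mathrm{Hom}_{2s'}(\mathbb{R}^d)$. I would establish this through the harmonic decomposition $P = \sum_{j=0}^{s'} r^{2j}h_{2s'-2j}$ with $h_i$ harmonic: both $\Delta^{s'}$ and the spherical mean annihilate every summand except the top one $r^{2s'}h_0$ (using $\Delta^{j+1}(r^{2j}h)=0$ for harmonic $h$, and that nonconstant harmonics integrate to $0$ over $S^{d-1}$), on which both are nonzero multiples of the constant $h_0$, the proportionality constant being positive by iterating $\Delta\,r^{2a} = 2a(2a+d-2)\,r^{2a-2}$.

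Applying this with $P = H^2 r^{2(s'-m)}$ and using $r=1$ on $S^{d-1}$ yields $\langle r^{2s'}, H^2 r^{2(s'-m)}\rangle = \kappa\int_{S^{d-1}}H(\xi)^2\,d\sigma(\xi)$ for some $\kappa>0$, which is strictly positive unless $H$ vanishes on $S^{d-1}$, i.e.\ unless $H\equiv 0$. Combined with the displayed identity forcing this quantity to vanish, we conclude $H=0$, so $\Phi$ is injective and the lemma follows. I would also remark that the hypothesis $m\le s'$ (equivalently $\ell\le s'+2$) is exactly what is needed, both for $r^{2(s'-m)}$ to be a genuine polynomial and for $\Phi$ to be injective: for $m>s'$ every harmonic $H$ of degree $m$ satisfies $H(\partial)r^{2s'}=0$, so the stated range of $\ell$ is sharp.
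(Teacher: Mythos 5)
Your proof is correct, and it is worth noting at the outset that the paper itself offers no argument for this lemma: it is quoted from Bannai--Bannai--Stanton \cite{Bannai_2}, where the spanning statement is established by elementary explicit computation (expanding the derivatives $\partial_1^{b_1}\cdots\partial_d^{b_d}(x_1^2+\cdots+x_d^2)^{s'}$ and an inductive linear-algebra argument showing every monomial of degree $\ell-2$ is reached). Your route is genuinely different and, to my mind, cleaner: the easy inclusion is immediate by homogeneity, and you convert the hard inclusion into nondegeneracy data via the apolar pairing $\langle P,Q\rangle=P(\partial)Q$, correctly identifying $\langle H,\partial^b r^{2s'}\rangle=\partial^b\bigl(H(\partial)r^{2s'}\bigr)$ as $b!$ times a coefficient of $H(\partial)r^{2s'}$, so that the lemma reduces to injectivity of $H\mapsto H(\partial)r^{2s'}$ on $\mathrm{Hom}_{\ell-2}$ for $\ell-2\leqslant s'$. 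Your injectivity argument is sound: the adjunction $\langle r^{2s'},H\cdot R\rangle=\langle H(\partial)r^{2s'},R\rangle$ is just commutativity of constant-coefficient operators, the choice $R=H\,r^{2(s'-m)}$ uses the hypothesis $m\leqslant s'$ exactly where it is needed, and the positivity claim $\Delta^{s'}P=\kappa\int_{S^{d-1}}P\,d\sigma$ with $\kappa>0$ on $\mathrm{Hom}_{2s'}$ follows as you say from the harmonic decomposition, the mean-value vanishing of nonconstant harmonics, and iteration of $\Delta(r^{2a}h)=2a(2a+2k+d-2)r^{2a-2}h$ for $h$ harmonic of degree $k$ (your stated special case with $h$ constant is what the top term requires). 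What each approach buys: the computational proof in \cite{Bannai_2} is self-contained and needs no harmonic analysis, while yours trades explicit constants for structure, gives positive-definiteness rather than bare nondegeneracy, and comes with the bonus sharpness observation that for $m>s'$ every harmonic $H$ of degree $m$ kills $r^{2s'}$, so the range $\ell\leqslant s'+2$ cannot be enlarged --- a point absent from the paper. The one dependency you should acknowledge explicitly is the classical Fischer/harmonic decomposition $\mathrm{Hom}_{2s'}=\bigoplus_j r^{2j}\mathcal{H}_{2s'-2j}$, which you invoke without proof; it is standard, so this is a citation matter, not a gap.
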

The following lemma is a modification of lemma in Bannai-Bannai-Stanton \cite{Bannai_2}. Since some of the parameters are different to the original theorem, we give it a prove.
\begin{lemma}\label{m_i_lemma}
    For $i=1,\cdots, N$, let $m_i\in\mathbb{R}$ and $y^{(i)}=(y_1^{(i)},\cdots,y_d^{(i)})\in\mathbb{R}^d$. For fixed integers $2\leqslant\ell\leqslant s+1$, suppose \[\sum_{i=1}^{N}m_{i}\|x-y^{(i)}\|^{2(s-1)}\] is a polynomial in $x$ of degree $\leqslant 2s-\ell-1$, then \[\sum_{i=1}^{N}m_{i}(y_1^{(i)})^{\alpha_1}\cdots(y_d^{(i)})^{\alpha_d}=0, \hspace{0.7cm}\forall\text{ } 0\leqslant\alpha_1+\cdots+\alpha_d\leqslant\ell-2.\]
\end{lemma}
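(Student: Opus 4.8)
The plan is to expand $\|x - y^{(i)}\|^{2(s-1)} = f(x - y^{(i)})$, where $f(x) = (x_1^2 + \cdots + x_d^2)^{s-1} = \|x\|^{2(s-1)}$, by Taylor's formula and then read off the degree hypothesis one homogeneous piece at a time. Writing $\partial^\beta = \partial_1^{\beta_1}\cdots\partial_d^{\beta_d}$, $\beta! = \beta_1!\cdots\beta_d!$ and $y^\beta = y_1^{\beta_1}\cdots y_d^{\beta_d}$ for a multi-index $\beta$, Taylor's formula for polynomials gives
\[ f(x - y^{(i)}) = \sum_{\beta} \frac{(-1)^{|\beta|}}{\beta!}\,(y^{(i)})^\beta\,(\partial^\beta f)(x). \]
Summing against the weights $m_i$ and setting $c_\beta := \sum_{i=1}^N m_i (y^{(i)})^\beta$, the expression in the statement becomes
\[ \sum_{i=1}^N m_i \|x - y^{(i)}\|^{2(s-1)} = \sum_{\beta} \frac{(-1)^{|\beta|}}{\beta!}\,c_\beta\,(\partial^\beta f)(x). \]
Since $f$ is homogeneous of degree $2s-2$, the polynomial $\partial^\beta f$ is homogeneous of degree $2s-2-|\beta|$, so the homogeneous part of the right-hand side of degree $2s-2-k$ equals $Q_k := \sum_{|\beta|=k} \tfrac{(-1)^k}{\beta!}\, c_\beta\, \partial^\beta f$, and distinct values of $k$ land in distinct graded pieces. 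The hypothesis that the left-hand side has degree $\le 2s - \ell - 1$ therefore forces $Q_k = 0$ for every $k$ with $2s-2-k > 2s-\ell-1$, that is, for all $0 \le k \le \ell - 2$.

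The crux is then to show that for each such $k$ the family $\{\partial^\beta f : |\beta| = k\}$ is linearly independent: given this, $Q_k = 0$ yields $c_\beta = 0$ for all $|\beta| = k$, and letting $k$ run from $0$ to $\ell-2$ produces exactly $c_\alpha = \sum_i m_i (y^{(i)})^\alpha = 0$ for every $0 \le |\alpha| \le \ell-2$, which is the desired conclusion. Note that the relevant range is $k \le \ell - 2 \le s - 1$.

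To prove this independence I would reduce, via the Laplacian $\Delta = \partial_1^2 + \cdots + \partial_d^2$, to the critical case covered by Lemma \ref{change_basis}. Suppose $\sum_{|\beta|=k} a_\beta\, \partial^\beta f = 0$ for some $k \le s-1$. Using the standard identity $\Delta \|x\|^{2m} = 2m(2m+d-2)\|x\|^{2m-2}$ repeatedly, one gets $\Delta^{\,s-1-k}f = \lambda\,\|x\|^{2k}$ with $\lambda = \prod_{j=k+1}^{s-1} 2j(2j+d-2)$. Because constant-coefficient operators commute with each $\partial^\beta$, applying $\Delta^{s-1-k}$ to the relation gives $\lambda \sum_{|\beta|=k} a_\beta\, \partial^\beta \|x\|^{2k} = 0$, hence $\sum_{|\beta|=k} a_\beta\, \partial^\beta \|x\|^{2k} = 0$. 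Now Lemma \ref{change_basis} with $s' = k$ and $\ell = k+2$ shows that $\{\partial^\beta \|x\|^{2k} : |\beta| = k\}$ spans the space of homogeneous polynomials of degree $k$; since the number of such $\beta$ equals $\binom{d+k-1}{d-1}$, which is precisely the dimension of that space, the spanning family is in fact a basis, so all $a_\beta = 0$.

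The main obstacle is exactly this last linear-independence step. Applied directly, Lemma \ref{change_basis} only controls derivatives of $\|x\|^{2(s-1)}$ of order $|\beta| = 2s-\ell$ within its admissible range $2 \le \ell \le s+1$, which reaches only the top-order derivatives $|\beta| = s-1$ and leaves the lower orders $k < s-1$ untouched. The Laplacian reduction is the device that brings those cases into range, trading $s-1-k$ powers of $\Delta$ for a lowering of the exponent of $\|x\|^2$; the one point to verify with care is that the scalar $\lambda$ never vanishes, which holds because each factor $2j(2j+d-2)$ is strictly positive for $j \ge 1$.
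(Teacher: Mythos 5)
Your proof is correct, but it runs in the opposite direction from the paper's. The paper applies Lemma \ref{change_basis} with $s'=s-1$ in its \emph{spanning} form: every translated monomial $(x_1-y_1^{(i)})^{\alpha_1}\cdots(x_d-y_d^{(i)})^{\alpha_d}$ with $|\alpha|=\ell-2$ is a fixed linear combination of the derivatives $\partial_1^{b_1}\cdots\partial_d^{b_d}\|x-y^{(i)}\|^{2(s-1)}$ with $b_1+\cdots+b_d=2s-\ell$, all of which are annihilated by the degree hypothesis; summing against the $m_i$ and evaluating at $x=0$ then gives the vanishing of the moments directly. You instead Taylor-expand, identify the homogeneous component of degree $2s-2-k$ with the moment vector $(c_\beta)_{|\beta|=k}$, and then need the \emph{linear independence} of the family $\{\partial_1^{\beta_1}\cdots\partial_d^{\beta_d}\|x\|^{2(s-1)}:|\beta|=k\}$ for every $k\leqslant s-1$ --- a statement that Lemma \ref{change_basis} plus the dimension count $\binom{d+k-1}{d-1}$ only delivers at the top order $k=s-1$, so you supply the extra ingredient $\Delta^{s-1-k}\|x\|^{2(s-1)}=\lambda\|x\|^{2k}$ with $\lambda=\prod_{j=k+1}^{s-1}2j(2j+d-2)\neq 0$ to push it down to all lower orders. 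Both arguments are sound. Yours costs an additional step (the Laplacian identity and the basis-by-counting argument), but it buys a sharper structural fact --- the graded pieces of $\sum_i m_i\|x-y^{(i)}\|^{2(s-1)}$ in degrees at least $s-1$ are in linear bijection with the moments of $(m_i,y^{(i)})$ up to order $s-1$ --- and it yields all moments $0\leqslant\alpha_1+\cdots+\alpha_d\leqslant\ell-2$ in one pass, whereas the paper's argument as written treats $|\alpha|=\ell-2$ and recovers the lower degrees by rerunning the argument with smaller values of $\ell$.
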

\begin{proof}
    Note that $\sum_{i=1}^{N}m_{i}\|x-y^{(i)}\|^{2(s-1)}$ is a polynomial of degree $\leqslant 2s-\ell-1$ in $x$ if and only if
    \begin{equation}\label{(0)}
        \partial_1^{b_1}\partial_2^{b_2}\cdots\partial_d^{b_d}(\sum_{i=1}^{N}m_{i}\|x-y^{(i)}\|^{2(s-1)})=0,\hspace{0.7cm}\forall\text{ }2s-\ell\leqslant b_1+\cdots+b_d\leqslant 2(s-1).
    \end{equation}
    By lemma \ref{change_basis}, (choose $s'=s-1$) 
    \[(x_1-y_1^{(i)})^{\alpha_1}\cdots(x_d-y_d^{(i)})^{\alpha_d}=\sum_{b_1+\cdots+b_d=2s'-\ell+2=2s-\ell} C_{b_{1}\cdots b_{d}}^{\alpha_{1}\cdots\alpha_{d}}\text{ }\partial_1^{b_1}\cdots\partial_d^{b_d}\text{ }[(x_1-y_1^{(i)})^2+\cdots+(x_d-y_d^{(i)})^2]^{s-1}\] 
    for some real $C_{b_{1}\cdots b_{d}}^{\alpha_{1}\cdots\alpha_{d}}$. So, as polynomials in $x$, we have 
    \begin{align*}
        &\sum_{i=1}^{N}\text{ }m_{i}(x_1-y_1^{(i)})^{\alpha_1}\cdots(x_d-y_d^{(i)})^{\alpha_d}\\
        &=\sum_{i=1}^{N}\text{ }m_{i}\sum_{b_1+\cdots+b_d=2s-\ell} C_{b_{1}\cdots b_{d}}^{\alpha_{1}\cdots\alpha_{d}}\text{ }\partial_1^{b_1}\cdots\partial_d^{b_d}\text{ }[(x_1-y_1^{(i)})^2+\cdots+(x_d-y_d^{(i)})^2]^{s-1}\\
        &=\sum_{b_1+\cdots+b_d=2s-\ell} C_{b_{1}\cdots b_{d}}^{\alpha_{1}\cdots\alpha_{d}}\sum_{i=1}^{N}\text{ }m_{i}\text{ }\partial_1^{b_1}\cdots\partial_d^{b_d}\text{ }[(x_1-y_1^{(i)})^2+\cdots+(x_d-y_d^{(i)})^2]^{{s-1}}\\
        &=0.\text{ }\text{ }(\text{by } (\ref{(0)}))
    \end{align*}
    By putting $x_1=\cdots=x_d=0$, we have done.
\end{proof}

%%%%%%%%%%%%%%%%%%%%%%%%%%%%%%%%%%%%%%%%%%%%%%%%%%%%%%%%%%%%%%%%%%
\section{Improvement of Generalization of LRS Theorem}
\hspace{0.6cm}Now, we can prove Theorem \ref{our_main_thm}:

\begin{proof}[Proof of Theorem \ref{our_main_thm}]
Fix $i\in\{1,2,\cdots,s\}$. For each $y\in X$, we define the polynomial
\begin{align*}
    F_{y}(x)&=\prod_{j=1,2,\cdots,s,\text{ }j\neq i}\frac{\alpha_{j}^{2}-\|x-y\|^{2}}{\alpha_{j}^{2}-\alpha_{i}^{2}},
\end{align*}
then $F_{y}\in W_{s-1}(\mathbb{R}^{d})$ for each $y\in X$. In order to prove Theorem \ref{our_main_thm}, we only need to show that the two sets 
\begin{equation}\label{indep}
    \text{ }\text{ }\{F_{y}(x):y\in X\}\text{ }\text{ and }\text{ }\{x_{1}^{\lambda_{1}}x_{2}^{\lambda_{2}}\cdots x_{d}^{\lambda_{d}}:0\leqslant\lambda_{1}+\lambda_{2}+\cdots+\lambda_{d}\leqslant s-2\}
\end{equation}
are linearly independent functions on $\mathbb{R}^{d}$, because
\[\text{dim}(\text{span}(\{x_{1}^{\lambda_{1}}x_{2}^{\lambda_{2}}\cdots x_{d}^{\lambda_{d}}:0\leqslant\lambda_{1}+\lambda_{2}+\cdots+\lambda_{d}\leqslant s-2\}))=\binom{d+s-2}{s-2}\] 
and Lemma \ref{dim_W} gives
\[\text{dim}(W_{s-1}(\mathbb{R}^{d}))\leqslant\binom{d+s-1}{s-1}+\binom{d+s-2}{s-2}.\]
By the independence of (\ref{indep}), we have \[\text{dim}(\text{span}\{F_{y}(x):y\in X\})\leqslant\binom{d+s-1}{s-1}+\binom{d+s-2}{s-2}-\binom{d+s-2}{s-2}=\binom{d+s-1}{s-1}.\] By the following two conditions:
\begin{enumerate}
    \item[1.] $\text{span}(\{F_{y}(x):y\in X\})$ is a linear subspace of $P_{s-1}(\mathbb{R}^{d})$ with dimension $\leqslant\binom{d+s-1}{s-1}$.
    \item[2.] $F_{y}(y)=\prod_{j\neq i}\frac{\alpha_{j}^{2}}{\alpha_{j}^{2}-\alpha_{i}^{2}}$ is a constant for all $y\in X$, $F_{y}(x)=1$ if $d(x,y)=\alpha_{i}$, $F_{y}(x)=0$ if $d(x,y)\neq\alpha_{i}$, and $F_{y}(x)=F_{x}(y)$ for all $x,y\in X$.
\end{enumerate}
in conjunction of Lemma \ref{main_lemma_Nozaki}, the theorem follows. 
Now, we prove the independence of 
(\ref{indep}). Suppose
\begin{equation}\label{suppose_sum=0}
    \sum_{y\in X}C_yF_y(x)+\sum_{0\leqslant \lambda_1+\lambda_2+\cdots+\lambda_d\leqslant s-2}C_{\lambda_{1}\lambda_{2}\cdots\lambda_{d}}\text{ }x_{1}^{\lambda_1}x_{2}^{\lambda_2}\cdots x_{d}^{\lambda_d}=0
\end{equation} for some $C_y, C_{\lambda_{1}\lambda_{2}\cdots\lambda_{d}}=C_{\lambda}\in\mathbb{R}^d$, with $y\in X, 0\leqslant \lambda_1+\lambda_2+\cdots+\lambda_d=\lambda\leqslant s-2$.
Choosing $x=u\in X$ in (\ref{suppose_sum=0}), we get \[(\prod_{j\neq i}\frac{\alpha_{j}^{2}}{\alpha_{j}^{2}-\alpha_{i}^{2}})C_u+\sum_{\lambda}C_{\lambda}u^{\lambda}=0\] By multiplying $C_u$ and summing over $u\in X$, we get 
\begin{equation}\label{sum_C_u}
    (\prod_{j\neq i}\frac{\alpha_{j}^{2}}{\alpha_{j}^{2}-\alpha_{i}^{2}})\sum_{y\in X}C_y^2+\sum_{\lambda}C_{\lambda}\sum_{y\in X}C_{y}\text{ }y_{1}^{\lambda_1}y_{2}^{\lambda_2}\cdots y_{d}^{\lambda_d}=0.
\end{equation}
It is enough to show 
\begin{equation}\label{claim_2}
    \sum_{y\in X}C_{y}\text{ }y_{1}^{\lambda_1}y_{2}^{\lambda_2}\cdots y_{d}^{\lambda_d}=0
\end{equation} for any $0\leqslant \lambda_1+\lambda_2+\cdots+\lambda_d\leqslant s-2$, since this implies $\sum_{y\in X}C_y^2=0$ in (\ref{sum_C_u}), and hence $C_y=0$ for all $y\in X$.

Now, we prove (\ref{claim_2}) by using induction on $\lambda:=\lambda_1+\lambda_2+\cdots+\lambda_d$. For the basic case  $\lambda=0$, it is clear that \[\sum_{y\in X}C_y=0\] by comparing the term of degree $2s-2$ in (\ref{suppose_sum=0}). Assume that (\ref{claim_2}) holds for $0\leqslant\lambda\leqslant\ell-3$, and show that (\ref{claim_2}) holds for $0\leqslant\lambda\leqslant\ell-2$ if $3\leqslant\ell\leqslant s$.
Expand
\begin{equation}\label{expansion}
    \sum_{y\in X}C_{y}F_{y}=\sum_{y\in X}C_{y}\sum_{t=1}^{s}A_{t}\|x-y\|^{2(s-t)}=\sum_{t=1}^{s}A_{t}\sum_{y\in X}C_{y}\|x-y\|^{2(s-t)}
\end{equation} for some $A_t\in\mathbb{R}$. By induction hypothesis, the $y^\lambda$ terms with degree $0\leqslant\lambda\leqslant\ell-3$ vanish in $\sum_{y\in X}C_{y}\|x-y\|^{2(s-t)}$, so $\sum_{y\in X}C_{y}\|x-y\|^{2(s-t)}$ as polynomial in $x$ has degree at most $2(s-t)-(\ell-2)$. 
Note that the only term in (\ref{expansion}) which allows degree $2s-\ell$ in $x$ is the term $t=1$, so \[\sum_{y\in X}C_{y}\|x-y\|^{2(s-1)}\] is a polynomial in $x$ of degree $\leqslant2s-\ell$. 
Since there are no terms in $\sum_{y\in X}C_{y}F_{y}$ with degree $2s-\ell$ (by (\ref{suppose_sum=0})), so $\sum_{y\in X}C_{y}\|x-y\|^{2(s-1)}$ has degree $\leqslant 2s-\ell-1$.
Using Lemma \ref{m_i_lemma} (with $N=|X|$, $m_i=C_y$), we have (\ref{claim_2}) holding for $0\leqslant\lambda\leqslant\ell-2$ if $3\leqslant\ell\leqslant s$. Thus, by induction, we have shown (\ref{claim_2}). This completes the proof of Theorem \ref{our_main_thm}.
\end{proof}

%%%%%%%%%%%%%%%%%%%%%%%%%%%%%%%%%%%%%%%%%%%%%%%%%%%%%%%%%%%%%%%%%%%
\section{Finitely Many $s$-distance Sets}

\hspace{0.6cm}In 1966, Einhorn and Schoenberg \cite{Schoenberg} proved that there are only finitely many two-distance sets $X$ in $\mathbb{R}^d$ with $|X|\geq d+2$. In $\cite{Nozaki}$, Nozaki also generalized this theorem from the situation of two-distance sets to $s$-distance sets, which stated as below:
\begin{thm}\label{finite_s}(\cite{Nozaki})
    There are finitely many $s$-distance sets $X$ in $\mathbb{R}^d$ with \[|X|\geqslant 2\binom{d+s-1}{s-1}+2\binom{d+s-2}{s-2}.\]
\end{thm}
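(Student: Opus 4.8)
The plan is to establish finiteness by combining three ingredients: an absolute upper bound on $|X|$, a discretization of the distances forced by the integer condition, and the rigidity of a point set whose full distance matrix is prescribed. Throughout, the count is understood up to isometry and, since rescaling preserves the $s$-distance property, up to homothety; concretely I would fix the scale by normalizing the largest distance of $X$ to be $1$ and count isometry classes of such normalized sets. First I would bound the cardinality: by the Bannai--Bannai--Stanton bound \cite{Bannai_2} every $s$-distance set in $\mathbb{R}^d$ has $|X|\leqslant\binom{d+s}{s}$, so the sets in question have $n:=|X|$ in the finite range $2\binom{d+s-1}{s-1}\leqslant n\leqslant\binom{d+s}{s}$, and it suffices to bound, for each such $n$, the number of normalized $s$-distance sets with exactly $n$ points.

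Next I would discretize the distances. Since $n\geqslant 2\binom{d+s-1}{s-1}$, Theorem \ref{our_main_thm} applies, so the numbers $k_i=\prod_{j\neq i}\frac{\alpha_j^2}{\alpha_j^2-\alpha_i^2}$ are integers with $|k_i|\leqslant M$, where $M=\lfloor\tfrac12+\sqrt{\tfrac{N^2}{2N-2}+\tfrac14}\rfloor$ and $N=\binom{d+s-1}{s-1}$; in particular there are only finitely many possible tuples $(k_1,\dots,k_s)\in\mathbb{Z}^s$. Writing $\beta_i=\alpha_i^2$, Lagrange interpolation of the constant function at the nodes $\beta_i$ gives the identities $\sum_{i=1}^s k_i\beta_i^{\,m}=\delta_{m,0}$ for $0\leqslant m\leqslant s-1$, and for distinct $\beta_i$ these are equivalent to the defining product formula for the $k_i$. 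Thus, for each fixed integer tuple, the admissible squared-distance vectors $(\beta_1,\dots,\beta_s)$ are exactly the solutions of the homogeneous polynomial system $\sum_i k_i\beta_i^{\,m}=0$ ($m=1,\dots,s-1$); granting that each such system has only finitely many solutions modulo scaling, the set of normalized squared-distance tuples is finite.

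Finally I would invoke rigidity. Fix $n$, a normalized distance tuple from this finite list, and one of the at most $s^{\binom{n}{2}}$ ways to label each pair of points by one of the $s$ distances. Each labeling prescribes the full Euclidean distance matrix of $X$, and a finite subset of $\mathbb{R}^d$ is determined up to isometry by its distance matrix; hence each labeling yields at most one normalized $s$-distance set up to isometry. Multiplying the finitely many choices of $n$, of distance tuple, and of labeling bounds the total count, which in particular proves Theorem \ref{finite_s}; note that the hypothesis $|X|\geqslant 2\binom{d+s-1}{s-1}+2\binom{d+s-2}{s-2}$ of Theorem \ref{finite_s} is more than enough to trigger the integer condition, while Theorem \ref{our_main_thm} lets one lower the threshold to $|X|\geqslant 2\binom{d+s-1}{s-1}$.

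The hard part is the finiteness claim in the second step: for a fixed integer tuple $(k_1,\dots,k_s)$, the set $S$ cut out by $F_m:=\sum_i k_i\beta_i^{\,m}=0$ ($m=1,\dots,s-1$) is a cone in $\mathbb{A}^s$, and I must show that its genuine points, those with distinct nonzero coordinates $\beta_i$, give only finitely many points of the projectivization $\bar S\subseteq\mathbb{P}^{s-1}$. I expect this to follow from a Jacobian computation: at a genuine point the matrix $[\,m\,k_i\beta_i^{\,m-1}\,]_{1\leqslant m\leqslant s-1,\,1\leqslant i\leqslant s}$ has maximal rank $s-1$, because its maximal minors are, up to nonzero scalar factors, Vandermonde determinants in the distinct $\beta_i$, and its one-dimensional kernel is exactly the scaling (Euler) direction, since $\sum_i \beta_i\,\partial F_m/\partial\beta_i=m F_m=0$ at a solution. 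Hence $\bar S$ is zero-dimensional at every genuine point, so the genuine locus is contained in the set of isolated points of $\bar S$, which is finite because a projective variety has finitely many irreducible components. Taking the union over the finitely many integer tuples then gives the finiteness of the normalized distance tuples used above.
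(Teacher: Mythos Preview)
Your argument is correct and follows the same three-step skeleton as Nozaki's proof (which the paper simply cites): bound $|X|$ above, use the integer condition to pin down the distance set to finitely many possibilities, then use rigidity of the distance matrix. The one substantive difference is in the middle step. The paper invokes Lemma~\ref{lemma_finite_nozaki}, which asserts that the normalized distances $\alpha_1,\dots,\alpha_s$ are \emph{uniquely} determined by the integer tuple $(k_1,\dots,k_s)$; you instead prove only \emph{finiteness} of the normalized $(\beta_1,\dots,\beta_s)$ by a Jacobian computation showing that every genuine solution of $\sum_i k_i\beta_i^{\,m}=0$ ($m=1,\dots,s-1$) is an isolated point of the projectivized variety. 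Your route is more self-contained and avoids appealing to Nozaki's lemma as a black box, at the cost of a slightly weaker intermediate statement; the paper's route is shorter but imports the key step from \cite{Nozaki}. Either way the finiteness conclusion follows, and as you note the hypothesis of Theorem~\ref{finite_s} is stronger than needed once Theorem~\ref{our_main_thm} is available.
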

This theorem is proved by using Theorem \ref{Nozaki_thm} and the following lemma in \cite{Nozaki}.

\begin{lemma}\label{lemma_finite_nozaki}(\cite{Nozaki})
    Let $X$ be an $s$-distance set with $|X|\geqslant 2\binom{d+s-1}{s-1}+2\binom{d+s-2}{s-2}$ and $A(X)=\{\alpha_1,\alpha_2,\cdots,\alpha_s=1\}$. Suppose $k_i$ are the ratios in Theorem \ref{Nozaki_thm}. Then the distances $\alpha_i$ are uniquely determined from given integers $k_i$.
\end{lemma}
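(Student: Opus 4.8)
The plan is to pass to the squared distances $\beta_i:=\alpha_i^2$, so that $\beta_s=1$ is fixed and the given data are the numbers $k_i=\prod_{j\neq i}\frac{\beta_j}{\beta_j-\beta_i}$; since each $\alpha_i$ is recovered from $\beta_i$ by taking the positive square root, it suffices to show that the tuple $(\beta_1,\dots,\beta_s)$ is uniquely determined. The starting point is the rational-function identity behind these numbers. Writing $G(x)=\prod_{j=1}^{s}(\beta_j-x)$, one has $G(0)=\prod_j\beta_j$ and $k_i\beta_i\,G'(\beta_i)=-G(0)$, and a partial-fraction expansion gives
\[
\frac{G(0)}{G(x)}=\prod_{j=1}^{s}\frac{\beta_j}{\beta_j-x}=\sum_{i=1}^{s}\frac{k_i\beta_i}{\beta_i-x},
\]
which exhibits the $k_i$ as normalized residues of a function whose poles are exactly the $\beta_i$.

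Expanding this identity as a Laurent series at $x=\infty$ and matching orders of decay (the left-hand side vanishes to order $s$), together with evaluation at $x=0$, yields the linear relations
\[
\sum_{i=1}^{s}k_i\beta_i^{m}=0\quad(1\le m\le s-1),\qquad \sum_{i=1}^{s}k_i=1.
\]
Using $\beta_s=1$, these become a square system $\sum_{i=1}^{s-1}k_i\beta_i^{m}=-k_s$ for $m=1,\dots,s-1$, in the $s-1$ unknowns $\beta_1,\dots,\beta_{s-1}$ with prescribed nonzero weights $k_i$. The heart of the lemma is to prove that this system admits at most one admissible solution, i.e.\ that the map $(\beta_1,\dots,\beta_{s-1})\mapsto(k_1,\dots,k_s)$ is injective on tuples of distinct positive reals. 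For $s=2$ this is immediate, since $\beta_1=(k_1-1)/k_1$ is an explicit function of $k_1$, recovering the Larman--Rogers--Seidel ratio.

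For general $s$ I would argue by contradiction: suppose $(\beta_i)$ and $(\beta_i')$ produce the same integers $k_i$. Subtracting the weighted power-sum relations above (which hold for \emph{both} configurations with the \emph{same} $k_i$) shows that the signed measure $\sum_i k_i(\delta_{\beta_i}-\delta_{\beta_i'})$ has vanishing moments of orders $0,1,\dots,s-1$; I would then seek to upgrade this to the equality of the monic polynomials $\prod_i(x-\beta_i)$ and $\prod_i(x-\beta_i')$, after which the multisets of squared distances coincide and, pairing each $\beta_i$ with its residue datum $k_i$, the labelled tuples agree as well. I expect the main obstacle to lie precisely at this upgrade: the $s$ vanishing moments alone cannot separate the up to $2(s-1)$ atoms carried by the difference measure once $s\ge 3$, so one must genuinely exploit the multiplicative structure of the $k_i$ (equivalently the reality and distinctness of the $\beta_i$, or an induction on $s$ that strips off the normalized distance $\alpha_s=1$ and the corresponding pole of $G(0)/G(x)$ at $x=1$) in order to exclude spurious solutions of the nonlinear system. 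Once uniqueness is secured, the intended application is immediate: by Theorem \ref{our_main_thm} the integers $k_i$ are bounded, hence only finitely many tuples $(k_1,\dots,k_s)$ occur, and each determines the distances $\alpha_i$ uniquely.
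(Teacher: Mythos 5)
First, note that this paper does not actually prove Lemma \ref{lemma_finite_nozaki}: it is imported verbatim from \cite{Nozaki}, so there is no in-paper argument to compare yours against. Judged on its own terms, your proposal contains a genuine gap, and to your credit you identify it yourself. The preparatory material is all correct: the partial-fraction identity $\prod_j \beta_j/(\beta_j-x)=\sum_i k_i\beta_i/(\beta_i-x)$, the residue computation $k_i\beta_i G'(\beta_i)=-G(0)$, and the resulting relations $\sum_i k_i=1$ and $\sum_i k_i\beta_i^m=0$ for $1\le m\le s-1$ all check out. But the entire content of the lemma is the injectivity of $(\beta_1,\dots,\beta_{s-1})\mapsto(k_1,\dots,k_s)$, and your proposal stops exactly there, replacing the proof with a statement of intent ("I would then seek to upgrade this\dots") and an honest admission that the moment conditions cannot do the job for $s\ge 3$. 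A plan plus a correct diagnosis of why the plan fails is not a proof.

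Your diagnosis is in fact sharper than you may realize: the weighted power-sum relations are \emph{identities} in the $\beta_i$, automatically satisfied by the defining formulas $k_i=\prod_{j\ne i}\beta_j/(\beta_j-\beta_i)$, so they carry no information beyond those definitions. (For $s=3$ with $\beta_3=1$ one can check directly that $k_1\beta_1+k_2\beta_2+k_3$ and $k_1\beta_1^2+k_2\beta_2^2+k_3$ vanish identically, for \emph{every} choice of distinct $\beta_1,\beta_2$.) Consequently any argument that only uses the vanishing moments of the difference measure $\sum_i k_i(\delta_{\beta_i}-\delta_{\beta_i'})$ — which is supported on up to $2s-2$ points while only $s$ moments vanish — is structurally incapable of yielding uniqueness once $s\ge 3$. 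The missing idea is to work directly with the nonlinear system given by the product formulas themselves (Nozaki's Section 3 does this, exploiting the ordering of the $\alpha_i$ and the normalization $\alpha_s=1$), for instance by an elimination or induction on $s$ that you gesture at but do not carry out. Until that step is supplied, the lemma is not proved; the final sentence about finiteness is fine but rests entirely on the unproven uniqueness.
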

The bound $|X|\geqslant 2\binom{d+s-1}{s-1}+2\binom{d+s-2}{s-2}$ in Lemma \ref{lemma_finite_nozaki} is actually coming from the bound in Theorem \ref{Nozaki_thm}. Since this bound have been improved by our Theorem \ref{our_main_thm}, we can also reduce the requirement of Theorem \ref{finite_s} to the bound $|X|\geqslant 2\binom{d+s-1}{s-1}$ immediately. Therefore, we have the following theorem:
\begin{thm}\label{our_second_result}
    There are finitely many $s$-distance sets $X$ in $\mathbb{R}^d$ with \[|X|\geqslant 2\binom{d+s-1}{s-1}.\]
\end{thm}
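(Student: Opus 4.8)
The plan is to deduce Theorem~\ref{our_second_result} directly from Theorem~\ref{our_main_thm}, exactly mirroring how Theorem~\ref{finite_s} follows from Theorem~\ref{Nozaki_thm} in \cite{Nozaki}, but now with the improved cardinality bound. The only ingredient that genuinely needs attention is Lemma~\ref{lemma_finite_nozaki}: as stated it carries the old hypothesis $|X|\geqslant 2\binom{d+s-1}{s-1}+2\binom{d+s-2}{s-2}$, and I must verify that its conclusion survives under the weaker hypothesis $|X|\geqslant 2\binom{d+s-1}{s-1}$. First I would normalize, as in the lemma, so that the largest distance equals $1$, i.e. $A(X)=\{\alpha_1,\dots,\alpha_{s-1},\alpha_s=1\}$; this is harmless since scaling an $s$-distance set by a positive constant preserves the $s$-distance property and the integers $k_i$, so finiteness of the rescaled family is equivalent to finiteness up to similarity of the original family.

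The heart of the argument is that the improved integer condition now applies at the lower threshold. By Theorem~\ref{our_main_thm}, every $s$-distance set $X$ with $|X|\geqslant 2\binom{d+s-1}{s-1}$ has the property that each
\[
k_i=\prod_{\substack{j=1,\dots,s\\ j\neq i}}\frac{\alpha_j^2}{\alpha_j^2-\alpha_i^2}
\]
is an integer, and moreover these integers are uniformly bounded, $|k_i|\leqslant\lfloor\tfrac12+\sqrt{\tfrac{N^2}{2N-2}+\tfrac14}\rfloor$ with $N=\binom{d+s-1}{s-1}$. Thus there are only finitely many possible tuples $(k_1,\dots,k_s)$ across all such $X$. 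The plan is then to invoke the uniqueness statement: once the tuple $(k_1,\dots,k_s)$ is fixed and we impose the normalization $\alpha_s=1$, the distances $\alpha_1,\dots,\alpha_{s-1}$ are determined. Since the cardinality of any $s$-distance set in $\mathbb{R}^d$ is itself bounded (for instance by $\binom{d+s}{s}$ from \cite{Bannai_2}), finitely many tuples $(k_1,\dots,k_s)$, each yielding a unique distance set, and a uniform bound on $|X|$, together force finitely many similarity classes, as desired.

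The main obstacle is therefore re-establishing the uniqueness conclusion of Lemma~\ref{lemma_finite_nozaki} under the relaxed bound. I would re-examine Nozaki's proof of that lemma and confirm that its use of the cardinality hypothesis enters \emph{only} through the integrality of the $k_i$ (supplied now by Theorem~\ref{our_main_thm}) and not through any separate estimate requiring the extra term $2\binom{d+s-2}{s-2}$. Concretely, the uniqueness comes from solving the algebraic system
\[
k_i=\prod_{\substack{j=1,\dots,s\\ j\neq i}}\frac{\alpha_j^2}{\alpha_j^2-\alpha_i^2},\qquad i=1,\dots,s,
\]
for $\alpha_1^2,\dots,\alpha_{s-1}^2$ after setting $\alpha_s^2=1$; this is a purely algebraic dependence of the $\alpha_i$ on the $k_i$ that has no cardinality requirement of its own. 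Once that verification is in place, the chain ``bounded $|X|$, finitely many admissible integer tuples $(k_1,\dots,k_s)$, unique distances per tuple'' closes with no further work, and Theorem~\ref{our_second_result} follows from Theorem~\ref{our_main_thm} in precisely the way Theorem~\ref{finite_s} followed from Theorem~\ref{Nozaki_thm}.
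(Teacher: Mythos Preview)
Your proposal is correct and follows essentially the same approach as the paper: the paper simply observes that the cardinality hypothesis in Lemma~\ref{lemma_finite_nozaki} enters only via Theorem~\ref{Nozaki_thm}, so once Theorem~\ref{our_main_thm} supplies the integer condition at the lower threshold, Nozaki's argument for Theorem~\ref{finite_s} carries over verbatim to give Theorem~\ref{our_second_result}. Your write-up is in fact more explicit than the paper's---you spell out the normalization, the finiteness of admissible tuples $(k_1,\dots,k_s)$, the purely algebraic nature of the uniqueness step, and the global cardinality bound---but the logical structure is identical.
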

However, for $s=2$, Theorem \ref{our_second_result} gives $|X|\geqslant 2\binom{d+2-1}{2-1}=2d+2$, which is bigger than the bound $|X|\geqslant d+2$ given by Einhorn and Schoenberg. Hence, the bound in Theorem \ref{our_second_result} might be improved. 

\section{Discussion}

\hspace{0.6cm}If $s=2$, Theorem \ref{our_main_thm} proves that any two-distance set $X$ in $\mathbb{R}^d$ with $|X|\geqslant 2\binom{d+2-1}{2-1}=2d+2$ has the integer condition. This is the same as the result of Neumaier's improvement of LRS theorem in 1981. Furthermore, if $X$ is a two-distance set in $\mathbb{R}^d$ with $|X|=2d+1$, Neumaier provided the counterexamples from the spherical embedding of conference graphs. Therefore, the bound of $|X|$ in Theorem \ref{our_main_thm} cannot be improved for any $s$-distance set in $\mathbb{R}^d$ in general. We are curious about whether the bounds can be improved or not for $s\geqslant 3$.

We think that those $s$-distance sets in $\mathbb{R}^d$ without integer condition can be obtained from the coherent configurations which contains a spherical embedding of conference graphs. However, such example may not have the size larger enough to attain the lower bounds $2\binom{d+s-1}{s-1}$ in Theorem \ref{our_main_thm}.
It is interesting to ask if there exists $s$-distance sets with size $2\binom{d+s-1}{s-1}-1$ in $\mathbb{R}^d$ without integer condition for some $s\geqslant 3$. \\

\textbf{Acknowledgement} We sincerely appreciate Eiichi Bannai for the useful discussion of this manuscript. We also thank NCTS supporting this project as the NCTS undergraduate research program. 

%%%%%%%%%%%%%%%%%%%%%%%%%%%%%%%%%%%%%%%%%%%%%%%%%%%%%%%%%%
%\medskip

\end{document}